\newcommand{\R}{\mathbb{R}}      
\newcommand*{\PlotPathPass}{.}
\newtheorem{theorem}{Theorem}[section]
\newtheorem{proposition}[theorem]{Proposition}
\newtheorem{definition}[theorem]{Definition}
\newtheorem{example}[theorem]{Example}
\newtheorem*{remark}{Remark}
\begin{document}
%
\title{Energy-Preserving and Passivity-Consistent Numerical Discretization of Port-Hamiltonian Systems}
%
%
%

\author{Elena Celledoni and Eirik Hoel H{\o}iseth\thanks{This work has received funding from the European Unions Horizon 2020
research and innovation programme under the Marie Sklodowska-Curie grant
agreement No. 691070, and from The Research Council of Norway.}\thanks{Elena Celledoni is currently with the Department of Mathematical Sciences at the Norwegian University of Science and Technology, Trondheim, 7491, Norway (e-mail: elena.celledoni@math.ntnu.no).}\thanks{Eirik H. H{\o}iseth (corresponding author) is currently with the Department of Mathematical Sciences at the Norwegian University of Science and Technology, Trondheim, 7491, Norway (e-mail: eirik.hoiseth@math.ntnu.no).}}

%
%

\markboth{Submitted to IEEE Transactions on Automatic Control in May 2017}%
{}
%



\maketitle

\begin{abstract}
In this paper we design discrete port-Hamiltonian systems systematically in two different ways, by applying discrete gradient methods and splitting methods respectively. The discrete port-Hamiltonian systems we get satisfy a discrete notion of passivity, which lets us, by choosing the input appropriately, make them globally asymptotically stable with respect to an equilibrium point. We test methods designed using the discrete gradient approach in numerical experiments, and the results are encouraging when compared to relevant existing integrators of identical order.
\end{abstract}
\begin{IEEEkeywords}

Asymptotic stability, discrete gradient methods, discrete port-Hamiltonian systems, energy balance, geometric numerical integration, interconnection, numerical integration methods, passivity, splitting methods, structure preserving algorithms.
\end{IEEEkeywords}

%
\IEEEpeerreviewmaketitle

%
%
%
%
\section{Introduction}
\IEEEPARstart{P}{ort-Hamiltonian}
systems are a recent and increasingly popular approach to modelling complex physical and engineering systems. This approach merges network theory with geometry and control. 

From network theory comes the concept of \emph{port-based modelling}, which allows for the modelling of complex systems, stretching over multiple physical domains. This is done by viewing the full system as a set of a (possibly large) number of simple ideal subsystems that are interconnected and communicate through the exchange of energy. Paynter pioneered this approach in \cite{paynter60}.

From geometric mechanics there is a focus on the underlying geometric structure of the system, see \cite{BBCM03}\cite{marsden99IMS}\cite{foundation}. Port-Hamiltonian systems represent a generalization of traditional Hamiltonian mechanics. Unlike traditional Hamiltonian mechanics, where the key geometry is that the phase space is endowed with a symplectic structure, the geometry of port-Hamiltonian systems comes from the interconnection structure of the system. The appropriate structure then appears to be a \emph{Dirac structure}, a generalization of both symplectic and Poisson structures that was first introduced in \cite{weinstein83}. Its use in port based modelling was first explored in \cite{vanderschaftymaschke95}\cite{dalsmo98ora}. An essential property of Dirac structures is that their appropriate composition again constitutes a Dirac structure. This ensures that interconnecting multiple port-Hamiltonian systems into a larger such system preserves this geometry.

Port-Hamiltonian systems can interact with their environment, and consequently the theory of control systems feature prominently.  
For our purposes the relevant example is interaction through inputs and outputs. Port-Hamiltonian systems can also be viewed as a technique for control design \cite{vanderschaft14}\cite{ortega99sop}\cite{dalsmo98ora}, e.g. by shaping the system energy or viewing controllers as virtual system components. A thorough introduction to port-Hamiltonian systems can be found in \cite{vanderschaft14}. 

In this paper we are concerned with the preservation of the remarkable properties of port-Hamiltonian system under numerical discretisation. We focus in particular on the energy balance and on the stability under interconnection. We will see that these properties are not automatically satisfied when replacing a continuous port-Hamiltonian system with its discrete counterpart obtained by applying a numerical discretisation method. And we propose two numerical approaches that will guarantee this preservation.

In geometric numerical integration, one seeks numerical integration methods preserving the structure of the flow one wishes to integrate \cite{hairer06gni}. For Hamiltonian mechanics, particularly for the unconstrained case where the configuration space is linear, there is a rich theory of structure preserving integrators: notably symplectic integrators \cite{vogelaere56}\cite{ruth83}\cite{feng85}\cite{sanzserna94}\cite{marsden01} and energy-preserving, symmetric integrators \cite{gonzalez96}\cite{QT1996}\cite{quispel08}\cite{Hairer2011}.  For port-Hamiltonian systems, structure preserving integration is far less explored. 

We restrict ourselves to the class of input-state-output port-Hamiltonian systems, and propose two approaches to construct discrete port-Hamiltonian systems. Our discrete models arise from the structure-preserving integration of their continuous counterparts. We analyse these methods, focusing in particular on discrete 
energy-preserving and passivity-preserving interconnection of simpler systems. The structure-preserving (and in particular passivity-preserving) integration of these systems is of interest both from a theoretical perspective and in engineering applications. See \cite{celledoni17} for an application of passivity preserving splitting methods to the control of marine vessels.

The structure of the paper is as follows. In Section \ref{contPHS} we give relevant background theory on continuous input-state-output port-Hamiltonian systems, and how they can be interconnected. In Section \ref{discretePHS} we consider the problem of numerically discretizing such port-Hamiltonian systems while preserving a discrete analogue of passivity. This reduces to energy preservation when the input is zero. Section \ref{Numexperiments} is devoted to numerical experiments. Finally we make some concluding remarks in Section \ref{concluding}. A higher order generalization of the method given in Section \ref{discretePHS} is derived in the Appendix.

\section{Background Theory}
\label{contPHS}
From the perspective of geometric mechanics, an input-state-output port-Hamiltonian system may be naturally introduced as a generalization of a traditional Hamiltonian mechanical system. In the absence of dissipative elements the following system of ordinary differential equations (ODEs) constitutes an input-state-output port-Hamiltonian system:
 
\begin{IEEEeqnarray}{rCl}
\label{PHS}  
\dot{x}&=&B(x)\, \nabla H(x)+ G(x)u, \nonumber\\
x(0)&=&x_0,\\
y&=&G(x)^T\nabla H (x), \nonumber
\end{IEEEeqnarray}
where $x\in\R^n$ is the state, $u\in\R^m$ the input and $y\in\R^m$ the output. Furthermore $B(x)$ is a skew-symmetric matrix (often, but not always, $B(x)$ defines a Poisson bracket), $H$ is the Hamiltonian function and $\nabla H (x)$ is the gradient of $H$ with respect to $x$. The input $u$ is given as a function of $t$, $x$ or $y$. We will usually take $u = u(y)$ reflecting the intuitive notion that the input often can only depend on the observable part of the system. 

The uncontrolled system, $\dot{x}=B(x)\, \nabla H(x)$, is assumed to have an isolated equilibrium point $x=x^*$. Since the change of coordinates $x \mapsto x-x^*$ will always move this equilibrium point to the origin, there is no loss of generality in taking $x^*=0$.

\subsection{Passivity}
Consider initially the general system of differential equations 
\begin{IEEEeqnarray}{rCl}
\label{gensys}
\dot{x} &=& f(x,u), \\
y &=& h(x), \nonumber
\end{IEEEeqnarray}
with state $x$, input $u$ and output $y$. $f: \R^n \times \R^m \mapsto \R^n$ is assumed to be locally Lipschitz with $f(0,0) = 0$ and $h : \R^n \mapsto \R^m$ continuous with $h(0) = 0$.

A common definition of passivity for such a system is the following from \cite[p. 236]{khalil02nls}:

\begin{definition}
\label{PassBasedControlDef}
The system \eqref{gensys} is passive if there exists a continuously differentiable positive semidefinite function $V(x)$, called the \emph{storage function}, such that
\begin{IEEEeqnarray}{c}
\label{passiveDef}
u^Ty \geq \dot{V} =  \nabla V(x)^T f(x,u), \ \ \forall (x,u) \in  \R^n \times \R^m.
\end{IEEEeqnarray}
\end{definition}
The system is said to be \emph{lossless} if $u^Ty = \dot{V}$. Integrating \eqref{passiveDef} we get the integral version of this passivity inequality
\begin{IEEEeqnarray}{rCl}
\label{passiveDefInt}
\left\langle y,u \right\rangle_{L^2} &:=& \int_0^ty^Tu\, ds \geq V(t)-V(0),\\
&&\forall \ \ t \geq 0, \ \ x(0) \in  \R^n, \ \ u(s): [0,t] \mapsto \R^m \,. \nonumber
\end{IEEEeqnarray}
We return to our port-Hamiltonian system \eqref{PHS}, which is of the format \eqref{gensys}. Differentiating the energy $H$ with respect to time, we obtain a differential equation for $H$
\begin{IEEEeqnarray}{c}
\label{eq:En1}
\dot{H}=y^Tu,
\end{IEEEeqnarray}
which states that the change in energy is equal to the work due to the external forces. This implies that the system \eqref{PHS} is passive, specifically lossless, with $H$ as the storage function. From the integral inequality \eqref{passiveDefInt} a system is passive with respect to the energy $H$ if it satisfies the inequality
\begin{IEEEeqnarray}{c}
\label{passivityPI}
\left\langle y,u \right\rangle_{L^2} \geq H(t)-H(0).
\end{IEEEeqnarray}
This means that such passive systems may consume and store energy, but are incapable of producing energy. For literature on the theory of passive systems see e.g. \cite{egeland2002mas}\cite{ortega98pce}\cite{khalil02nls}. An important consequence of this property is that if a system is passive it is possible to achieve asymptotic stability of the system by adding appropriate damping.

We will need the following two definitions from \cite{khalil02nls}: 

\begin{definition}
\label{def:zerostateobservable}
A system of the form (\ref{PHS}) is  \emph{zero-state observable} if no solution of $\dot{x}=B(x)\, \nabla H(x)$ can stay identically in the set $\{y(x)=0 \}$ other than the trivial solution $x(t)\equiv 0$.
\end{definition}

\begin{definition}
\label{def:radiallyunbounded}
A function $H:\R^n \mapsto \R$ is \emph{radially unbounded}  if $H(x)\rightarrow \infty$ as $\|x\|\rightarrow \infty$.
\end{definition}
Asymptotic stability, through the addition of appropriate damping, is given by the following theorem:
\begin{theorem}
\label{PassBasedControl}
If the passive system \eqref{PHS} has a radially unbounded positive definite Hamiltonian $H$,
and is zero-state observable, then the origin, $x=0$, can be globally stabilized by the input choice $u=-\phi(y)$ where $\phi: \R^m \mapsto \R^m$ is a locally Lipschitz function with the additional properties $\phi(0)=0$ and $y^T\phi(y)>0$ for all $y\neq 0$.
\end{theorem}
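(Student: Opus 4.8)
The plan is to use the Hamiltonian $H$ itself as a Lyapunov function for the closed-loop system and then invoke LaSalle's invariance principle to upgrade mere stability to global asymptotic stability. Since $H$ is assumed positive definite and radially unbounded (Definition \ref{def:radiallyunbounded}), it is a legitimate candidate for a global Lyapunov function, and its sublevel sets $\{x : H(x) \leq c\}$ are compact; this compactness is what will ultimately yield global, rather than merely local, conclusions. I would also note at the outset that since $f$ is locally Lipschitz and $\phi$ is assumed locally Lipschitz, the closed-loop vector field $x \mapsto B(x)\nabla H(x) - G(x)\phi(y)$ is locally Lipschitz, so solutions exist and are unique and LaSalle is applicable.

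First I would substitute the feedback law $u = -\phi(y)$ into the energy balance \eqref{eq:En1}. Because the system is lossless with storage function $H$, this gives directly
\begin{IEEEeqnarray}{c}
\dot{H} = y^T u = -y^T \phi(y) \leq 0,
\end{IEEEeqnarray}
where the inequality follows from the assumption $y^T \phi(y) > 0$ for $y \neq 0$ together with $\phi(0) = 0$. Hence $\dot{H}$ is negative semidefinite along closed-loop trajectories, which already establishes Lyapunov stability of the origin; combined with compactness of the sublevel sets it also shows that all trajectories are bounded and therefore defined for all $t \geq 0$.

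The next step is to characterize the set $E = \{x : \dot{H}(x) = 0\}$. By the strict sign condition on $\phi$, the equation $\dot{H} = 0$ forces $y = 0$, so $E = \{x : y(x) = 0\}$. I would then apply LaSalle's invariance principle: every bounded trajectory converges to the largest invariant set $M \subseteq E$. The crucial observation is that on $E$ we have $y = 0$, so the feedback vanishes, $u = -\phi(0) = 0$, and the closed-loop dynamics reduce there to the uncontrolled system $\dot{x} = B(x)\nabla H(x)$. Thus any trajectory remaining in $E$ is precisely a solution of the uncontrolled system staying identically in $\{y = 0\}$, and zero-state observability (Definition \ref{def:zerostateobservable}) then forces it to be the trivial solution $x \equiv 0$. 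Hence $M = \{0\}$, and every trajectory converges to the origin, giving global asymptotic stability.

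The main subtlety, and the step I would be most careful about, is precisely this reduction linking LaSalle's set $E$ to the hypothesis of zero-state observability. Zero-state observability is stated only for the open-loop system $\dot{x} = B(x)\nabla H(x)$, so it can only be invoked after observing that the feedback deactivates on $E$; verifying that the invariant set $M$ is genuinely governed by the uncontrolled vector field is what makes the observability assumption applicable. The remaining ingredients, namely radial unboundedness ensuring global validity and positive definiteness ensuring that $H$ is a proper Lyapunov function, are then routine.
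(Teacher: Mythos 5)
Your proof is correct. The paper does not write out an argument for Theorem \ref{PassBasedControl}---it simply cites \cite{khalil02nls}---but the LaSalle-based argument you give (taking $H$ as the Lyapunov function, computing $\dot{H}=-y^{T}\phi(y)\leq 0$, and using zero-state observability of the open-loop system to show the largest invariant subset of $\{y=0\}$ is the origin) is exactly the standard proof found there, and it mirrors step for step the paper's own proof of the discrete analogue, Theorem \ref{discPassBasedControl}.
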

\begin{proof}
See \cite[p. 604]{khalil02nls}.
\end{proof}

\subsection{Interconnection}
Another important property of port-Hamiltonian systems is that they are stable under interconnection.
Given two port-Hamiltonian systems one can create a third one via a procedure called  {\it interconnection}, which we describe briefly in what follows.
Consider the two port-Hamiltonian systems

\begin{IEEEeqnarray}{rCl}
\label{PHS1}
\dot{x}&=&B(x)\, \nabla H(x)+ G(x)u,\nonumber \\
x(0)&=&x_0, \\
y&=&G(x)^T\nabla H (x), \nonumber
\end{IEEEeqnarray}
and
\begin{IEEEeqnarray}{rCl}
\label{PHS2}
\dot{\gamma}&=&B_c(\gamma)\, \nabla H_c(\gamma)+ G_c(\gamma)u_c, \nonumber \\
\gamma(0)&=&\gamma_0, \\
y_c&=&G_c(\gamma)^T\nabla H_c (\gamma). \nonumber
\end{IEEEeqnarray}
We join the two systems by {\it interconnection} by imposing
the energy balance condition
\begin{IEEEeqnarray}{c}
\label{interconnectioncondition}
y^Tu+y_c^Tu_c=0,
\end{IEEEeqnarray}
which states that energy flowing out of one system through the ports flows into the other. A simple way to satisfy \eqref{interconnectioncondition} is to take
\begin{IEEEeqnarray}{c}
\label{interconnection}
u=-y_c,\quad u_c=y.
\end{IEEEeqnarray}
Using \eqref{interconnection} we obtain the larger system
\begin{IEEEeqnarray}{rCl}
\label{interconnectedsystem}
\left[\begin{array}{c}
\dot{x}\\
\dot{\gamma}
\end{array}
\right] &=&C(x,\gamma)\, \left[ \begin{array}{c}
\nabla H (x)\\
\nabla H_c(\gamma)
\end{array}\right], \\ 
C(x,\gamma)&:=&\left[\begin{array}{cc}
B(x) & -G(x)G_c(\gamma)^T\\
G_c(\gamma)G(x)^T & B_c(\gamma)
\end{array} \right], \nonumber
\end{IEEEeqnarray}
with Hamiltonian $H(x)+H_c(\gamma)$. The obtained system is port-Hamiltonian. Because of this property one says that port-Hamiltonian systems are stable under interconnection. Usually the first system is given, and the second system is designed to control the first one. This means that one should design $H_c(\gamma)$ so that the new system is driven to the desired equilibrium state $x^*$, here again taken to be $0$ without loss of generality. We note that the Casimirs of the larger system are also of importance, for the purpose of control design. See for example \cite{vanderschaft14} for details. We also mention that if the skew-symmetric structure matrix $C(x,\gamma)$ satisfies also the Jacobi identity, then $C(x,\gamma)$ defines a Poisson bracket.


\subsection{Interconnection and Generalized Dirac Structures}\label{Dirac structures}

Let $\mathcal{Q}$ be a smooth manifold and let $T\mathcal{Q}$ and $T^*\mathcal{Q}$ be its tangent and cotangent bundle respectively. Consider the smooth vector bundle over $\mathcal{Q}$
$$
T\mathcal{Q}\oplus T^*\mathcal{Q},
$$
with fibres $T_x\mathcal{Q}\times T_x^*\mathcal{Q}$.

A generalised Dirac structure on $\mathcal{Q}$  is a vector subbundle 
$$
\mathcal{D}\subset T\mathcal{Q}\oplus T^*\mathcal{Q},\quad \mathrm{s.t.}\quad \mathcal{D}=\mathcal{D}^{\perp},
$$
where
\begin{IEEEeqnarray*}{rCl}
\mathcal{D}^{\perp}&:=&\{\, (e,f)\in T\mathcal{Q}\oplus T^*\mathcal{Q} \,|\,\, \langle e,f' \rangle+ \langle e',f\rangle=0,\,\, \\ 
&&\forall (e',f')\in \mathcal{D} \,\}, 
\end{IEEEeqnarray*}
and $\langle \cdot , \cdot \rangle$ is the duality pairing between $T_x\mathcal{Q}$ and $T_x^*\mathcal{Q}$.

If $\mathcal{F}$ is an $n$-dimensional vector space, it can be shown that $\mathcal{D}\subset \mathcal{F}\oplus\mathcal{F}^*$ is a (constant) Dirac structure on $\mathcal{F}$ if and only if

\begin{itemize}
\item $\langle e , f\rangle =0$ for all $(e,f)\in \mathcal{D}$. 
\item $\mathrm{dim}(\mathcal{D})=n$.
\end{itemize}

 Symplectic structures induce Dirac structures on $\mathcal{Q}$. Let us denote with  $\mathcal{X}_H$ the Hamiltonian vector field with Hamiltonian $H$ with respect to an  almost-symplectic structure $\Omega $ on $\mathcal{Q}$ ($\Omega$ is a nondegenerate two-form on $\mathcal{Q}$ which is not necessarily closed), 
and let
\begin{IEEEeqnarray}{rCl}
\mathcal{D}_\Omega&:=&\{\,( \mathcal{X}_H,dH)\in T\mathcal{Q}\oplus T^*\mathcal{Q}\,|\, \Omega(\mathcal{X}_H,\cdot)=dH, \nonumber\\
 &&H:\mathcal{Q}\rightarrow  \mathbb{R}\, \},  \label{DiracStexample}
\end{IEEEeqnarray}
then $\mathcal{D}_\Omega$ is a generalized Dirac structure. In this sense Dirac structures are generalisations of symplectic structures.

Let $\tilde{\Omega}$ be the two form associated with the skew-symmetric matrix $C$ in \eqref{interconnectedsystem}. The interconnection of the two port-Hamiltonian systems \eqref{PHS1} and \eqref{PHS2} under the condition \eqref{interconnectioncondition} gives a larger system with state variables $X=(x,\gamma)$, with energy $\tilde{H}=H+H_c$. The couple $(\dot{X},d\tilde{H})$ such that  $\dot{X}=C(X)\,\nabla \tilde{H}(X)$, belongs to the Dirac structure $\mathcal{D}_{\tilde{\Omega}}$ induced by $\tilde{\Omega}$.

\section{Discrete Port-Hamiltonian Systems and Discrete Passivity Based Control}
\label{discretePHS}

In this section we propose a definition of {\it discrete port-Hamiltonian systems}, see also \cite{aues13cio}.
For the numerical discretization of port-Hamiltonian systems we will focus on two important aspects: the preservation of a discrete energy balance equation and the stability under interconnection.

\subsection{Discrete Energy Balance}
To start we consider a general discrete system 
\begin{IEEEeqnarray}{c}
\label{disSys}
x_{n+1}=\Phi(x_n),
\end{IEEEeqnarray}
with the given initial state $x_0 \in \mathbb{R}^n$. A function $V: \mathbb{R}^n \mapsto \mathbb{R}$ is called a (discrete) Lyapunov function for \eqref{disSys} on a set $S \subset \mathbb{R}^n$ if it is continuous on $\mathbb{R}^n$ and $\Delta V(x) := V(\Phi(x))-V(x) \leq 0$ for all $x \in S$. We require the following discrete Invariance Principle from \cite[p. 9]{lasalle86}:

\begin{proposition}
\label{invariance}
If $V$ is Lyapunov function for \eqref{disSys} in $S \subset \mathbb{R}^n$ and the solution $x_n$ of \eqref{disSys} is in $S$ and bounded, then there is number $c$ such that $x_n \rightarrow M \cap V^{-1}(c) \neq \emptyset$ where $M$ is the largest positively invariant set contained in the set $E = \{x \in \mathbb{R}^n: \Delta V= 0 \} \cap \bar{S}$.
\end{proposition}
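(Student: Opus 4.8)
The plan is to mimic the continuous LaSalle argument, with the $\omega$-limit set of the discrete orbit playing the central role. First I would observe that along the trajectory $\Delta V(x_n) = V(x_{n+1}) - V(x_n) \leq 0$, since each $x_n \in S$; hence the sequence $\{V(x_n)\}$ is nonincreasing. Because the orbit is bounded, its closure is compact, and the continuous function $V$ is bounded below on it; a nonincreasing sequence that is bounded below converges, so $V(x_n) \to c$ for some $c \in \mathbb{R}$. This already identifies the constant $c$ appearing in the statement.

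Next I would introduce the positive limit set $L^{+} = \{\, p : x_{n_k} \to p \text{ for some subsequence} \,\}$. Since the orbit is bounded, $L^{+}$ is nonempty and compact, and a standard extraction argument shows $\mathrm{dist}(x_n, L^{+}) \to 0$, which is exactly the meaning of $x_n \to L^{+}$ in the statement. For any $p \in L^{+}$, choosing $x_{n_k} \to p$ and using continuity of $V$ gives $V(p) = \lim_k V(x_{n_k}) = c$, so $V \equiv c$ on $L^{+}$, that is $L^{+} \subset V^{-1}(c)$. Moreover every point of $L^{+}$ is a limit of orbit points lying in $S$, so $L^{+} \subset \bar{S}$.

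The crucial step is to show that $L^{+}$ is positively invariant under $\Phi$. Here I would invoke continuity of $\Phi$: if $p \in L^{+}$ with $x_{n_k} \to p$, then $x_{n_k+1} = \Phi(x_{n_k}) \to \Phi(p)$, whence $\Phi(p) \in L^{+}$. Invariance of $L^{+}$ then forces $V(\Phi(p)) = c = V(p)$ for every $p \in L^{+}$, i.e. $\Delta V \equiv 0$ on $L^{+}$; combined with $L^{+} \subset \bar{S}$ this gives $L^{+} \subset E$. Since $M$ is by definition the largest positively invariant subset of $E$, we conclude $L^{+} \subset M$, and therefore $L^{+} \subset M \cap V^{-1}(c)$. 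As $L^{+}$ is nonempty, this intersection is nonempty, and since $x_n \to L^{+} \subset M \cap V^{-1}(c)$ the claimed convergence follows.

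The main obstacle I anticipate is precisely the positive invariance of $L^{+}$, which is where continuity of the map $\Phi$ is indispensable: without it the image of a limit point need not itself be a limit point, and the entire argument collapses. I would therefore need $\Phi$ (the numerical flow map) to be continuous, a hypothesis that should be flagged explicitly and which is in any case satisfied by the discrete-gradient and splitting methods studied later in the paper. The remaining steps are routine monotone-convergence and $\omega$-limit-set bookkeeping.
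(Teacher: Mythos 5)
The paper does not prove this proposition at all --- it is quoted verbatim from LaSalle's monograph \cite[p.~9]{lasalle86} --- and your argument is precisely the standard proof of the discrete invariance principle given there: monotone convergence of $V(x_n)$ to a limit $c$, nonemptiness and compactness of the positive limit set $L^{+}$ with $x_n \rightarrow L^{+} \subset V^{-1}(c)\cap\bar{S}$, positive invariance of $L^{+}$ via continuity of $\Phi$, hence $L^{+}\subset M$. The proof is correct, and you are right to flag continuity of $\Phi$ as the one hypothesis the paper leaves implicit in \eqref{disSys}; it is a standing assumption in LaSalle's setting and is satisfied by the discrete-gradient and splitting maps considered later, provided the implicit update equations are well posed.
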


Now let us consider a consistent numerical integration method $x_{n+1}=\Phi_h(x_n,u)$ of \eqref{PHS} producing the approximations $x_n\approx x(t_n)$ for $t_n=n\, h$ with $h$ the step size of integration. Clearly the solution will depend on the choice of input function $u$ in \eqref{PHS}, which we here assume is given as a function of the output $y$, i.e. $u = u(y)$. 
\begin{definition}
Assume the method $x_{n+1}=\Phi_h(x_n,u)$ produces $m$ intermediate approximations of the output and of the input, $Y_n := [y_{n1},\hdots,y_{nm}]$ and $U_n := [u_{n1},\hdots,u_{nm}]$, with $u_{nj}:=u(y_{nj})$ and
$$\lim_{h\rightarrow 0}y_{nj} =y(x(t_n)), \quad \lim_{h\rightarrow 0} u_{nj} = u(y(x(t_n))),$$ 
$j = 1,\hdots,m$. We say the method satisfies a \emph{discrete energy balance equation} if there exist positive weights $b_j$ with $\displaystyle \sum_j b_j = 1$ such that
\begin{IEEEeqnarray}{rCl}
\Delta H(x_n) &:=& H(x_{n+1})-H(x_n)= \left\langle Y_n,U_n \right\rangle_{L^2_h} \nonumber \\
&:=& h\sum_{j=1}^m b_jy_{nj}^Tu_{nj}\, ,  \label{DEC}
\end{IEEEeqnarray}
holds for arbitrary $n$ and $h$.
\end{definition}
Note that $\displaystyle \lim_{h \to 0} \left\langle Y_n,U_n \right\rangle_{L^2_h} = \left\langle y,u \right\rangle_{L^2}$. This property will be used to prove a discrete analogue of Theorem~\ref{PassBasedControl}.

\begin{theorem}
\label{discPassBasedControl}
Suppose the continuous system \eqref{PHS} has a radially unbounded positive definite storage function $H$, and that the consistent numerical method $x_{n+1}=\Phi_h(x_n,u)$ for this system satisfies a discrete energy balance \eqref{DEC}. Furthermore assume that no solution sequence of $x_{n+1}=\Phi_h(x_n,0)$ gives zero output, i.e. $\{y_{nj}=0\}$ for all $n$, $j$, except the trivial solution, $x_n = 0$, for all $n$ (discrete zero-state observability). The origin, $x=0$, can then be globally stabilised with the choice of an appropriate control input 
$u=-\phi(y)$, where $\phi$ is a locally Lipschitz function such that $\phi(0)=0$ and $y^T\phi(y)>0$ for all $y\neq 0$.
\end{theorem}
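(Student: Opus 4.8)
The plan is to mirror the continuous argument behind Theorem~\ref{PassBasedControl}, but with the classical LaSalle invariance principle replaced by its discrete counterpart, Proposition~\ref{invariance}, and with the storage function $H$ taking the role of the discrete Lyapunov function. First I would insert the feedback $u=-\phi(y)$ into the discrete energy balance \eqref{DEC}. Since the intermediate inputs are $u_{nj}=u(y_{nj})=-\phi(y_{nj})$, this yields
\begin{IEEEeqnarray*}{rCl}
\Delta H(x_n) &=& h\sum_{j=1}^m b_j\, y_{nj}^T u_{nj} = -h\sum_{j=1}^m b_j\, y_{nj}^T \phi(y_{nj}) \leq 0,
\end{IEEEeqnarray*}
where the inequality uses the positivity of the weights $b_j$ together with $y^T\phi(y)\geq 0$, with equality exactly when $y=0$. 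Thus $H$ is a discrete Lyapunov function for the closed-loop map on all of $S=\R^n$.

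Next I would establish the boundedness needed to apply Proposition~\ref{invariance}. Because $H$ is positive definite and $\Delta H\leq 0$, we have $H(x_n)\leq H(x_0)$ for every $n$, so the trajectory stays in the sublevel set $\{x : H(x)\leq H(x_0)\}$; radial unboundedness (and continuity) of $H$ makes this set compact, so $x_n$ is bounded and remains in $S$. The discrete invariance principle then gives $x_n\to M\cap H^{-1}(c)$ for some constant $c$, where $M$ is the largest positively invariant set contained in $E=\{x:\Delta H=0\}$.

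The crux is identifying $M$. On $E$ the energy balance forces $y_{nj}^T\phi(y_{nj})=0$ for all $j$, and since $y^T\phi(y)>0$ for $y\neq 0$ this means $y_{nj}=0$ for every $j$; consequently $u_{nj}=-\phi(0)=0$, so at such a state the closed-loop update $\Phi_h(x_n,u)$ coincides with the uncontrolled update $\Phi_h(x_n,0)$. A point of $M$ generates an entire forward orbit lying in $E$, i.e.\ an orbit of the zero-input method producing identically zero output at every stage; by the discrete zero-state observability hypothesis the only such orbit is $x_n\equiv 0$. Hence $M=\{0\}$ and $c=H(0)=0$, yielding global attractivity $x_n\to 0$ from every initial state.

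Finally I would upgrade attractivity to global asymptotic stability via the standard discrete Lyapunov stability estimate: positive definiteness of $H$ together with $\Delta H\leq 0$ confines trajectories started in a small sublevel set to a prescribed neighbourhood of the origin, and combined with the attractivity already shown this gives the stated conclusion. The step I expect to be the main obstacle is pinning down $M$: it is essential to observe that $\phi(0)=0$ \emph{closes the loop}, so that the discrete zero-state observability assumption, which is formulated for the zero-input scheme $\Phi_h(\cdot,0)$, genuinely applies to the closed-loop orbits trapped in $E$. Everything else is a faithful discrete transcription of the continuous passivity-based stabilisation argument.
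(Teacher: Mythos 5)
Your proposal is correct and follows essentially the same route as the paper's proof: substitute the feedback into the discrete energy balance to get $\Delta H \le 0$, use radial unboundedness for boundedness of trajectories, invoke the discrete invariance principle (Proposition~\ref{invariance}), and identify $M=\{0\}$ via the discrete zero-state observability hypothesis. Your added remarks on compactness of sublevel sets and on why $\phi(0)=0$ lets the zero-input observability assumption apply to the closed-loop orbits are just slightly more explicit versions of steps the paper leaves implicit.
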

\begin{proof}
From the discrete energy balance
\begin{IEEEeqnarray*}{rCl}
\Delta H(x_n) &=& \left\langle Y_n,U_n \right\rangle_{L^2_h} := h\sum_{j=1}^m b_jy_{nj}^Tu_{nj} \\
&=& -h\sum_{j=1}^m b_jy_{nj}^T\phi(y_{nj}) \leq 0 ,
\end{IEEEeqnarray*}
where the last inequality follows from the properties of $\phi$ and the positiveness of the weights $b_j$. Since $H$ is continuous on $\mathbb{R}^n$ it is a (discrete) Lyapunov function on $\mathbb{R}^n$ for the discrete method $x_{n+1}=\Phi_h(x_n,u)$. In addition, because $H$ is radially unbounded, it follows that all solutions of this discrete system are bounded.

Consequently from Proposition \ref{invariance} $x_n \rightarrow M \neq \emptyset$, where $M$ is the largest positively invariant set contained in the set $E = \{x \in \mathbb{R}^n: \Delta H(x) = 0\}$. Thus if $M = \{0\}$ the origin will be globally asymptotically stable.

Now, from the above calculations $\Delta H(x_n) = 0$ implies $y_{nj} = 0$ for all $j$, which from the properties of $\phi$ implies that $u_{nj} = 0$ for all $j$. The zero-state observability requirement yields $x_n = 0$ for $n \in \mathbb{N}$, and consequently $M = \{0\}$.
\end{proof}

\subsection{Discrete Gradient  Methods}

A discrete gradient $\bar{\nabla} H: \mathbb{R}^n\times \mathbb{R}^n \rightarrow \mathbb{R}^n$ is an approximation of the gradient of a function $H:\mathbb{R}^n\rightarrow \mathbb{R}$, satisfying the following two properties:
\begin{enumerate}
\item $\bar{\nabla} H(x,x')^T(x'-x)=H(x')-H(x)$,
\item $ \bar{\nabla} H(x,x)= \nabla H(x)$.
\end{enumerate}

We consider the following consistent numerical discretization of (\ref{PHS}):
\begin{IEEEeqnarray}{rCl}
\frac{x_{n+1}-x_n}{h}&=&\tilde{B}(x_n,x_{n+1})\bar{\nabla} H(x_n,x_{n+1})\nonumber \\
&& +\> \tilde{G}(x_n,x_{n+1})\,\tilde{u}_n,\label{disgradPHS}
\end{IEEEeqnarray}
where we define $\tilde{u}_n := u(\tilde{y}_n)$, and the discrete output $\tilde{y}_n$ is defined to be
\begin{IEEEeqnarray}{c}
\label{disOut}
\tilde{y}_n := \tilde{G}(x_n,x_{n+1})^T\bar{\nabla}H(x_n,x_{n+1}).
\end{IEEEeqnarray}
Here $\bar{\nabla} H(x_n,x_{n+1})$ is a discrete gradient, $\tilde{B}$ and $\tilde{G}$ depend on $x_n$ and $x_{n+1}$ continuously, and are consistent discretizations, e.g. $\displaystyle \lim_{h \to 0}\tilde{B}(x_n,x_{n+1}) = B(x_n)$ in the case of $B$, and, in addition, $\tilde{B}(x_n,x_{n+1})$ is assumed to be skew-symmetric. 

From the first property of discrete gradients
one easily verifies that a discrete energy balance equation is satisfied. In fact
\begin{IEEEeqnarray*}{rCl}
\IEEEeqnarraymulticol{3}{l}{
H(x_{n+1})-H(x_n)}\\*
&=&h\bar{\nabla} H(x_n,x_{n+1})^T\tilde{B}(x_n,x_{n+1})\bar{\nabla} H(x_n,x_{n+1})\\
&&+\> h\bar{\nabla} H(x_n,x_{n+1})^T\tilde{G}(x_n,x_{n+1})\tilde{u}_n \\
&=&h\tilde{y}_n^T\tilde{u}_n.
\end{IEEEeqnarray*}

It is easy to verify that the hypotesis of Theorem \ref{discPassBasedControl} hold for the discrete passive systems of the form \eqref{disgradPHS}-\eqref{disOut}, with $m=1$, $y_{n1} := \tilde{y}_n$, $u_{n1} := \tilde{u}_n$ and $b_1 = 1$.

\begin{example}
One choice for the discrete gradient is the averaged vector field (AVF) discrete gradient
\begin{IEEEeqnarray}{rCl}
\bar{\nabla} H(x_n,x_{n+1})&\equiv &\int_0^1\nabla H(\rho({\alpha}))\,d\alpha, \label{avfdisgrad} \\
\rho(\tau) &=& x_n(1-\tau)+x_{n+1}\tau, \nonumber
\end{IEEEeqnarray}
Note that we have
\begin{IEEEeqnarray*}{rCl}
H(x_{n+1})-H(x_n)&=&\int_0^1\nabla H(\rho({\alpha}))^T\dot{\rho}(\alpha)\,d\alpha \\*
&=&\int_0^1\nabla H(p({\alpha}))^T\, d\alpha\,\,(x_{n+1}-x_n) \\
&=& \bar{\nabla} H(x_n,x_{n+1})^T \,(x_{n+1}-x_n), \\
\bar{\nabla} H(x_n,x_{n})&=&\int_0^1\nabla H(x_n)\,d\alpha  =\nabla H(x_n),
\end{IEEEeqnarray*}
so this is a discrete gradient.
If in addition we use the value at the midpoint $x_{n+\frac{1}{2}}=\frac{x_n+x_{n+1}}{2}$ to approximate $B$ and $G$, i.e. 
\[
\tilde{B}(x_n,x_{n+1}) = B\left(x_{n+\frac{1}{2}}\right), \ \ \tilde{G}(x_n,x_{n+1}) = G\left(x_{n+\frac{1}{2}}\right),
\] 
we get a second order method for \eqref{PHS} of the format \eqref{disgradPHS}. 
\begin{IEEEeqnarray}{rCl}
\frac{x_{n+1}-x_n}{h}&=& B\left(x_{n+\frac{1}{2}}\right)\int_0^1\nabla H(\rho({\alpha}))\,d\alpha \nonumber \\*
&& +\>G\left(x_{n+\frac{1}{2}}\right)\,\tilde{u}_n, \label{avfPHS}
\end{IEEEeqnarray}
\end{example}

\begin{remark} 
If $H$ is a polynomial of the components of $x$, then the integral in \eqref{avfPHS} can be explicitly computed. In particular for quadratic $H$ and linear $\nabla H$, one finds that (\ref{avfPHS}) coincides with the midpoint method. This explains the behaviour observed in \cite{aues13cio}, where the authors show that the midpoint method is energy-preserving for linear port-Hamiltonian systems. This property of the midpoint rule ceases to hold if $H$ is a polynomial function of higher degree than quadratic, see numerical experiments in Section~\ref{controlled pendulum}. Generalisations of \eqref{avfPHS} to higher order can be easily obtained using the ideas of \cite{Hairer2011}. See the Appendix for details. 
\end{remark}

\subsection{Interconnection and preservation of the generalised Dirac structure}
We consider the interconnection of two discrete port-Hamiltonian systems of the form \eqref{disgradPHS}-\eqref{disOut}

\begin{IEEEeqnarray*}{rCl}
\frac{x_{n+1}-x_n}{h}&=&B(x_{n+\frac{1}{2}})\, \bar{\nabla} H(x_n,x_{n+1})+ G(x_{n+\frac{1}{2}})u_{n+\frac{1}{2}},\\
y_{n+\frac{1}{2}}&=&G(x_{n+\frac{1}{2}})^T\bar{\nabla} H(x_n,x_{n+1}),
\end{IEEEeqnarray*}
\begin{IEEEeqnarray*}{rCl}
\frac{\gamma_{n+1}-\gamma_n}{h}&=&B_c(\gamma_{n+\frac{1}{2}})\, \bar{\nabla} H_c(\gamma_n,\gamma_{n+1})+ G_c(\gamma_{n+\frac{1}{2}})u_{c,n+\frac{1}{2}},\\
y_{c,n+\frac{1}{2}}&=&G_c(\gamma_{n+\frac{1}{2}})^T\bar{\nabla} H_c(\gamma_n,\gamma_{n+1}),
\end{IEEEeqnarray*}
under the  interconnection condition
$$y_{n+\frac{1}{2}}^Tu_{n+\frac{1}{2}}+y_{c,n+\frac{1}{2}}^Tu_{c,n+\frac{1}{2}}=0,$$
which we satisfy by imposing $u_{n+\frac{1}{2}}:=-y_{c,n+\frac{1}{2}}$, $u_{c,n+\frac{1}{2}}:=y_{n+\frac{1}{2}}$.
We obtain a larger discrete system 
$$
\left[\begin{array}{c}
\frac{x_{n+1}-x_n}{h}\\
\frac{\gamma_{n+1}-\gamma_n}{h}
\end{array}
\right] =C(x_{n+\frac{1}{2}},\gamma_{n+\frac{1}{2}})\, \left[ \begin{array}{c}
\bar{\nabla} H(x_n,x_{n+1})\\
\bar{\nabla} H_c(\gamma_n,\gamma_{n+1})
\end{array}\right],
$$
with $C(x,\gamma)$ given in \eqref{interconnectedsystem}. By the skew-symmetry of $C(x,\gamma)$, and the properties of discrete gradients, the obtained discrete system preserves the energy $\tilde{H}(x,\gamma)=H(x)+H_c(\gamma)$. In fact,
\begin{IEEEeqnarray*}{rCl}
\IEEEeqnarraymulticol{3}{l}{
\tilde{H}(x_{n+1},\gamma_{n+1})-\tilde{H}(x_n,\gamma_n)=\bar{\nabla} H(x_n,x_{n+1})^T(x_{n+1}-x_n)} \\*
&&+\> \bar{\nabla} H_c(\gamma_n,\gamma_{n+1})^T(\gamma_{n+1}-\gamma_n)=0.
\end{IEEEeqnarray*}
Using the definition~\eqref{DiracStexample}, we consider the constant Dirac structures $\mathcal{D}_{\tilde{\Omega}_{n}}$, where $\tilde{\Omega}_{n}$ is the two-form associated with the skew-symmetric matrix $C(x_{n+\frac{1}{2}},\gamma_{n+\frac{1}{2}})$. The couples of (time) discrete vector field and discrete gradient obtained by interconnection of the two discrete port-Hamiltonian systems and given by
$$\left(\left[\begin{array}{c}
\frac{x_{n+1}-x_n}{h}\\
\frac{\gamma_{n+1}-\gamma_n}{h}
\end{array}
\right],
\left[ \begin{array}{c}
\bar{\nabla} H(x_n,x_{n+1})\\
\bar{\nabla} H_c(\gamma_n,\gamma_{n+1})
\end{array}\right]
 \right), \quad  n=0,1,2,\dots$$
 belong to $\mathcal{D}_{\tilde{\Omega}_{n}}$ for all $n$. We can view $\mathcal{D}_{\tilde{\Omega}_{n}}$ for $n=0,1,2,\dots$ as a time-discrete  approximation of the Dirac structure $\mathcal{D}_{\tilde{\Omega}}$ considered at the end of Section~\ref{Dirac structures}.


\subsection{Splitting Methods}

We can also consider a splitting method. Assume the skew-symmetric matrix $B$ in \eqref{PHS} permits the splitting $B(x) = B_1(x) + B_2(x)$ where $B_1(x)$ and  $B_2(x)$ are again both skew-symmetric. Using this matrix splitting to split the vector field of \eqref{PHS}, pushing the control part into the second system, we have

\begin{IEEEeqnarray}{rCl}
S_1&:&\,\left\{\begin{array}{rcl}
\dot{x}&=&B_1(x)\, \nabla H(x)\, ,
\end{array}\right. \label{split} \\
S_2&:&\,\left\{\begin{array}{rcl}
\dot{x}&=&B_2(x)\, \nabla H(x) + G(x)u(y)\, .\\
\end{array}\right. \nonumber
\end{IEEEeqnarray}
with the normal output $y=G(x)^T\nabla H (x)$. 
Let the flow maps that advance the system some time $t$ forward along $S_1$ and $S_2$ be denoted $\Phi_{t}^{[S_1]}$ and $\Phi_{t}^{[S_2]}$ respectively. Now suppose we apply a splitting method 

\begin{IEEEeqnarray}{rCl}
x_{n+1} &=& \Phi_{a_1h}^{[S_2]}\circ \Phi_{b_1h}^{[S_1]}\circ \Phi_{a_2h}^{[S_2]}
\circ \cdots \circ \Phi_{a_{m+1}h}^{[S_2]}\nonumber \\
&&\circ \> \cdots \circ \Phi_{b_1h}^{[S_1]}\circ \Phi_{a_1h}^{[S_2]}\,
\left(x_n\right), \label{splittingGeneral}
\end{IEEEeqnarray}
to the full system \eqref{PHS}. Here we assume that all coefficients $a_i$ and $b_i$ are non-negative, and that the method is consistent, i.e.
\[
2\sum_{i=1}^{m} a_i + a_{m+1} = 2\sum_{i=1}^m b_i = 1.
\]
This implies that the method has a well defined numerical flow $\Phi_{\tau}$ with the property $\Phi_{2h}(x_n) = x_{n+1}$. This limits us to second order methods, as higher order splitting methods (with real coefficients) must have some stricly negative coefficients \cite{blanes05}.

\begin{theorem}
Let the splitting method \eqref{splittingGeneral} be applied to the splitting \eqref{split} of a system \eqref{PHS} with radially unbounded, continuous and positive definite storage function $H$. If no solution of $S_2$ with $u=0$ can stay in the set $y=0$ other than the trivial solution $x=0$, then the origin $x=0$ for the full system \eqref{PHS} can be globally stabilised with the choice of an appropriate control input $u(y)=-\phi(y)$. Here $\phi$ is a locally Lipschitz function such that $\phi(0)=0$ and $y^T\phi(y)>0$ for all $y\neq 0$.
\end{theorem}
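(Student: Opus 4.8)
The plan is to follow the template of the proof of Theorem~\ref{discPassBasedControl}: exhibit $H$ as a continuous discrete Lyapunov function for the one-step map $x_{n+1}=\Phi_{2h}(x_n)$ of the splitting method \eqref{splittingGeneral}, invoke the discrete Invariance Principle (Proposition~\ref{invariance}), and then use the zero-state observability hypothesis to collapse the largest positively invariant set inside $\{\Delta H=0\}$ to the origin. The essential difference from Theorem~\ref{discPassBasedControl} is that the per-step energy decrease now comes from the geometry of the splitting rather than from a discrete gradient identity, so the energy balance must be read off directly from the exact subflows.

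First I would compute the energy rate along each subsystem. Along $S_1$ the skew-symmetry of $B_1$ gives $\dot{H}=\nabla H^{T}B_1\nabla H=0$, so each flow $\Phi_{b_ih}^{[S_1]}$ preserves $H$ exactly. Along $S_2$ with the feedback $u=-\phi(y)$, the skew-symmetry of $B_2$ gives $\dot{H}=\nabla H^{T}B_2\nabla H+\nabla H^{T}Gu=y^{T}u=-y^{T}\phi(y)\le 0$. Summing the contributions of the $S_2$ flows appearing in \eqref{splittingGeneral}, the full step therefore satisfies
\[
\Delta H(x_n)=H(x_{n+1})-H(x_n)=-\sum_{j}\int_{I_j}y^{T}\phi(y)\,dt\le 0,
\]
where the intervals $I_j$ range over the $S_2$-substeps. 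Hence $H$ is a continuous discrete Lyapunov function on $\R^n$, and radial unboundedness together with $\Delta H\le 0$ forces every solution sequence to be bounded.

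With these ingredients Proposition~\ref{invariance} yields $x_n\rightarrow M$, the largest positively invariant set contained in $E=\{x:\Delta H(x)=0\}$, so it remains to show $M=\{0\}$. For any $x\in M$ we have $\Delta H(x)=0$; since each $S_2$-substep contributes a non-positive term, every such term must vanish, and because $y^{T}\phi(y)$ is continuous and non-negative this forces $y\equiv 0$ (hence $u=-\phi(0)=0$) throughout every $S_2$-substep of positive length. Consistency ensures the total $S_2$ duration is $h>0$, so at least one coefficient is positive; at the entry point $z$ of the first non-degenerate $S_2$-substep we then have a genuine solution of $S_2$ with $u=0$ remaining in $\{y=0\}$, and the observability hypothesis forces this solution, and in particular $z$, to be zero.

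The final step is to transport $z=0$ back to $x$. Because $H$ is positive definite, the origin is its strict global minimiser and thus a critical point, so $\nabla H(0)=0$; since also $\phi(0)=0$, the origin is a common equilibrium of $S_1$ and $S_2$, hence a fixed point of each invertible flow map $\Phi_{t}^{[S_1]}$, $\Phi_{t}^{[S_2]}$. The map carrying $x$ to $z$ is a composition of such flow maps, so $z=0$ implies $x=0$; therefore $M=\{0\}$ and the origin is globally asymptotically stable. I expect the main obstacle to be precisely this pull-back: one must allow for degenerate (zero-coefficient) leading substeps, apply observability at an intermediate stage rather than at $x_n$ itself, and then carry the conclusion back to $x_n$ using invertibility of the subflows together with the fact that the origin is their shared fixed point.
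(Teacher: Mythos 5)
Your proposal follows essentially the same route as the paper's proof: energy conservation along $S_1$ and monotone decay along $S_2$ under $u=-\phi(y)$ give $\Delta H\le 0$ over each composite step, then Proposition~\ref{invariance} and zero-state observability of $S_2$ collapse the invariant set to $\{0\}$. Your final pull-back argument (handling possibly degenerate substeps and transporting $z=0$ back to $x_n$ via invertibility of the subflows and the origin being their common fixed point) is actually spelled out more carefully than in the paper, which simply asserts $\tilde{x}_k=0$ and hence $x_n=0$.
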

\begin{proof}
Consider an arbitrary step from $x_n$ to $x_{n+1}$ for $n \in \mathbb{N}$. We apply $4m+1$ numerical flows alternating between $\Phi_{t}^{[S_1]}$ and $\Phi_{t}^{[S_2]}$. Let $\tilde{x}_k$ be the point we have reached after applying $k$ of these flows, e.g. $\tilde{x}_0 = x_n$ and $\tilde{x}_{4m+1} = x_{n+1}$. From \eqref{splittingGeneral} it is clear that on $[\tilde{x}_k,\tilde{x}_{k+1}]$ we are flowing along $\Phi_{t}^{[S_1]}$ if $k$ is odd and $\Phi_{t}^{[S_2]}$ if $k$ is even. It is also clear that for the system $S_1$, $\dot{H} = 0$ and for $S_2$, $\dot{H} = -y^T\phi(y) \leq 0$. Consequently $H(\tilde{x}_{k+1})-H(\tilde{x}_{k}) = 0$ if $k$ is odd and $H(\tilde{x}_{k+1})-H(\tilde{x}_{k}) \leq 0$ if $k$ is even. Thus
\begin{IEEEeqnarray*}{rCl}
\Delta H(x_n) &:=& H(x_{n+1})-H(x_{n}) := H(\tilde{x}_{4m+1})-H(\tilde{x}_{0}) \\*
&=& \sum_{k=0}^{4m} H(\tilde{x}_{k+1})-H(\tilde{x}_{k}) \leq 0.
\end{IEEEeqnarray*}
From an identical argument as in Theorem \ref{discPassBasedControl} $x_n \rightarrow M \neq \emptyset$, where $M$ is the largest positively invariant set contained in the set $E = \{x \in \mathbb{R}^n: \Delta H(x) = 0\}$. As before the origin will therefore be globally asymptotically stable if $M = \{0\}$.

Now, from the above calculations $\Delta H(x_n) = 0$ implies $y = 0$ while flowing along $S_2$, i.e. $[\tilde{x}_k,\tilde{x}_{k+1}]$ with $k$ even. From the properties of $\phi$ this implies that $u = 0$ here. The zero-state observability requirement then yields $x = 0$ here, which means $\tilde{x}_k = 0$ and thus $x_n = 0$ for $n \in \mathbb{N}$. Consequently $M = \{0\}$.
\end{proof}

\subsection{Discrete Energy Balance and Runge-Kutta Methods}

It can be easily shown that if the Hamiltonian is a polynomial function of the components of $x$, and the structure matrix $B$ does not depend on $x$, then applying the method \eqref{avfPHS} to \eqref{PHS} results in a Runge-Kutta method, see \cite{celledoni09epr}. This shows that if we restrict to polynomial Hamiltonian functions there exist Runge-Kutta methods which satisfy a discrete energy balance equation. A concrete example is given by the midpoint method applied to problems with constant $B$ and quadratic Hamiltonians, resulting in a linear port-Hamiltonian system. See Section~IIIB in \cite{aues13cio}.

However, without such restrictions, this is not possible. 
\begin{proposition}
No Runge-Kutta method satisfies \eqref{DEC} for general Hamiltonian functions $H$.
\end{proposition}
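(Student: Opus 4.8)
The plan is to reduce the claim to the known impossibility of exact energy conservation by general Runge--Kutta methods, and then to indicate how that impossibility is established. The first step is to specialise to the uncontrolled case $u \equiv 0$. If the input function is identically zero, then $u_{nj} = u(y_{nj}) = 0$ for every intermediate stage, so the right-hand side of \eqref{DEC} vanishes and the discrete energy balance collapses to $H(x_{n+1}) = H(x_n)$. Hence a Runge--Kutta method satisfying \eqref{DEC} would, in particular, have to conserve $H$ \emph{exactly} along the flow of $\dot{x} = B(x)\,\nabla H(x)$, for every step size $h$ and every admissible Hamiltonian. It therefore suffices to show that no Runge--Kutta method conserves $H$ for arbitrary $H$; the proposition then follows immediately.

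For this I would use the B-series representation. Applied to $\dot{x} = f(x)$ with $f = B\,\nabla H$, any Runge--Kutta method produces a one-step map that expands as a B-series indexed by rooted trees $\tau$, with coefficients $a(\tau)$ determined by the tableau. Expanding $H(x_{n+1}) - H(x_n)$ as a power series in $h$ then turns exact energy conservation into an infinite family of polynomial conditions on the $a(\tau)$. The crucial point is that the coefficients arising from a Runge--Kutta tableau are not independent: they must factor through the tableau, which imposes rigid multiplicative relations among the $a(\tau)$ across different trees. Energy conservation, by contrast, forces a different pattern of coefficients, namely the one realised by energy-preserving B-series such as the averaged vector field method of \eqref{avfdisgrad}.

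The main obstacle is that a single counterexample Hamiltonian for a single tableau does not suffice: the statement must be established for \emph{all} Runge--Kutta methods simultaneously, which is exactly what the B-series formalism makes tractable. Comparing the two sets of conditions already at low order --- it is enough to test cubic Hamiltonians --- one finds that the coefficients demanded by energy conservation cannot be realised by any Runge--Kutta tableau; this is precisely the content of the result in \cite{celledoni09epr}. I would therefore (i) set up the general Runge--Kutta B-series, (ii) extract the energy-balance conditions at the first few tree orders for the class $f = B\,\nabla H$, and (iii) exhibit the contradiction with the multiplicativity relations forced by the tableau, concluding that \eqref{DEC} cannot hold for general $H$.
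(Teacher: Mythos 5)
Your overall strategy is sound, and the reduction to the uncontrolled case is a legitimate simplification that the paper does not make explicit: since $u(y)\equiv 0$ is an admissible input, a Runge--Kutta method satisfying \eqref{DEC} for general $H$ would in particular have to conserve $H$ exactly along $\dot{x}=B(x)\nabla H(x)$, so the proposition does follow from the non-existence of energy-preserving Runge--Kutta methods established in \cite{celledoni09epr}. The paper instead keeps a nonzero input (a derivative controller $u=[0,\bar{u}\dot{q}]^T$) in its counterexample and verifies directly that the term $\left\langle Y_n,U_n \right\rangle_{L^2_h}$ contributes exactly $h\bar{u}$, so that the whole burden of \eqref{DEC} again falls on reproducing $F(q_1)-F(q_0)=\int_0^h f(q)\,ds$ exactly. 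If you are content to cite Proposition~4 of \cite{celledoni09epr} as a black box, your proof is complete and arguably cleaner; the paper's version is self-contained.

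The gap is in your steps (ii)--(iii), where you sketch the underlying impossibility rather than citing it. You assert, but do not exhibit, the contradiction between the B-series coefficients of a Runge--Kutta tableau and those required for energy conservation, and your claim that ``it is enough to test cubic Hamiltonians'' is unsupported and is false along the route that both the paper and \cite{celledoni09epr} actually take. With the degenerate family $H=p-F(q)$, energy conservation reduces to the quadrature conditions $\sum_{i=1}^s b_ic_i^{k-1}=1/k$; for $F$ polynomial of any \emph{fixed} degree these are finitely many conditions and are met, e.g., by an $s$-stage Gauss method with $2s$ large enough. The decisive point in the paper's proof is that exactness for \emph{arbitrary} $F$ forces all quadrature conditions $k=1,2,\dots$ simultaneously, which no finite-stage method satisfies. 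So if you want a self-contained argument you should reproduce that unbounded-degree (or non-polynomial) counterexample family rather than appeal to a fixed low-order obstruction.
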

\begin{proof}
The proof is very similar to the proof of Proposition~4 in \cite{celledoni09epr}. Consider a system of type \eqref{PHS} with $x(t)=[q(t),p(t)]^T$ with the degenerate Hamiltonian function $H=p-F(q)$ and input from a derivative controller $u=[0,\bar{u}\dot{q}]^T$ ($\bar{u}$ the constant controller gain). We define $f(q):=\frac{\partial F}{\partial q}$. Let $B$ be the constant $2\times 2$ Darboux matrix, and $G=I$ the $2\times 2$ identity matrix. 
The equations \eqref{PHS} for this system are 
\begin{IEEEeqnarray*}{rCl}
\dot{q}&=&1,\\
\dot{p}&=&f(q)+\bar{u},\\
y&=&\nabla H,
\end{IEEEeqnarray*}
where $y$ is the output.
All B-series methods (including all Runge-Kutta methods)  over one step with initial condition  $q_0$ give $q_1=q_0+h$. Energy consistency according to 
\eqref{DEC} requires
\begin{IEEEeqnarray*}{c}
H(q_1,p_1)-H(q_0,p_0)=\left\langle Y_n,U_n \right\rangle_{L^2_h} \approx\int_0^h y^Tu\, ds=h\bar{u},
\end{IEEEeqnarray*}
and we observe that any consistent approximation $\left\langle Y_n,U_n \right\rangle_{L^2_h} \approx\int_0^h y^Tu\, ds=h\bar{u}$ would reproduce this integral exactly.
From this we get
\begin{IEEEeqnarray*}{rCl}
p_1&=&p_0+(F(q_1)-F(q_0))+\left\langle Y_n,U_n \right\rangle_{L^2_h} \\*
&=&p_0+\int_0^hf(q)\, ds+h\bar{u}.
\end{IEEEeqnarray*}
On the other hand a Runge-Kutta method would give an approximation $p_1$ in the form
$$p_1=p_0+h\sum_{i=1}^sb_if(c_ih,q_0+c_ih)+h\bar{u}.$$
This leads to the condition
$$h\sum_{i=1}^sb_if(c_ih,q_0+c_ih)=F(q_1)-F(q_0)=\int_0^hf(q)\, ds,$$
for the Runge-Kutta method, which can be satisfied for an arbitrary $F$ only if
 all quadrature conditions 
 $$
 \sum_{i=1}^sb_ic_i^{k-1}=\frac{1}{k}, \quad k=1,2,\dots
 $$
 are satisfied by the Runge-Kutta method.
\end{proof}

\section{Numerical Experiments}
\label{Numexperiments}
The numerical experiments focus on the introduced discrete gradient methods. See \cite{celledoni17} for an application of passivity preserving splitting methods.

\subsection{Controlled Rigid Body}
In the first numerical experiment we illustrate the preservation of the discrete energy balance equation for the method \eqref{avfPHS}, and see how the method achieves a correct energy exchange between external power and internal energy.
The test problem is a controlled rigid body spinning around its center of mass. The kinetic energy is
 $$H({\omega},q)=\frac{1}{2}(I_1\,\omega_1^2+I_2\,\omega_2^2+I_3\,\omega_3^2)+\frac{1}{2}q^Tq,$$
 with ${\omega}\in \mathbb{R}^3$ the angular velocity and $q\in \mathbb{R}^4$ the unit quaternion representing the attitude rotation of the body,
 and the equations are
\begin{IEEEeqnarray*}{rCl}
 \mathbb{I}\dot{\omega}&=&-\hat {\omega}\,\, \nabla_{\omega} H ({\omega},q)+u, \\\ 
 u&:=&-K_d\nabla_{\omega} H (\omega,q)-K_p\nabla_q H(\omega,q),\\
 \dot{q}&=&\left[ \begin{array}{cc}
 0 & \mathbf{0}^T\\
 \mathbf{0} & \hat{\omega} 
 \end{array}\right]\, \nabla_q H(\omega,q),
\end{IEEEeqnarray*}
with output
 $$y=\nabla H (\omega,q).$$
 Here $K_d$ is $3\times 3$ diagonal and $K_p$ is $3\times 4$.
 The energy balance equation reads
 \begin{IEEEeqnarray*}{rCl}
 \frac{d \, H}{dt}&=& \nabla_{\omega} H^T\dot{\omega}+\nabla_q H^T\dot{q}
 \\
  &=&\nabla_{\omega} H^T\mathbb{I}^{-1}u \\
  &=&-\nabla_{\omega} H^T\mathbb{I}^{-1}K_d\nabla_{\omega} H-\nabla_{\omega} H^T\mathbb{I}^{-1}K_p\nabla_q H.
  \end{IEEEeqnarray*}
 We apply the method \eqref{disgradPHS}-\eqref{disOut},
 and we obtain
the discrete energy balance equation 

\begin{IEEEeqnarray}{rCl}
H(\omega_N,q_N)-H(\omega_0,q_0)&=&-h\sum_{n=0}^{N-1} \bar{\nabla}_{\omega} H_n^T\,\mathbb{I}^{-1}K_d\,\bar{\nabla} _{\omega}H_n \nonumber \\*
&&-\> h\sum_{n=0}^{N-1} \bar{\nabla}_{\omega} H_n^T\,\mathbb{I}^{-1}K_p\,\bar{\nabla}_{q} H_n, \nonumber \\*
&&\label{discreteEnBalRB}
\end{IEEEeqnarray}
with
\begin{IEEEeqnarray*}{rCl}
 \bar{\nabla}_{\omega} H_n&:=& \bar{\nabla}_{\omega} H((\omega_{n},q_n),(\omega_{n+1},q_{n+1})), \\*  \bar{\nabla}_{q} H_n&:=& \bar{\nabla}_{q} H((\omega_{n},q_n),(\omega_{n+1},q_{n+1})).
\end{IEEEeqnarray*}

 In Fig.~\ref{CRB1} we plot (in red) separately the discrete external power $A_{ext}$ (i.e. minus the right hand side of the discrete energy balance equation), and the difference in the Hamiltonian $H(t_n) - H(t_0)$ in blue, (i.e. the left hand side of the discrete energy balance equation). We obtain the expected energy exchange. In Fig.~\ref{CRB2} we show that indeed the sum of these two energies is zero to machine precision. The inertia matrix is $\mathbb{I}=\mathrm{diag}(1,2,3)$, $K_d=\mathrm{diag}(3,4,5)$, $K_p=\left[ \mathrm{diag}(3,5,6),\mathbf{1} \right]$. Here $\mathbf{1}\in\mathbb{R}^3$ is the vector with all components equal to $1$.

\begin{figure}[!t]
 \begin{center}
\subfloat[]{\includegraphics[width=0.47\linewidth]{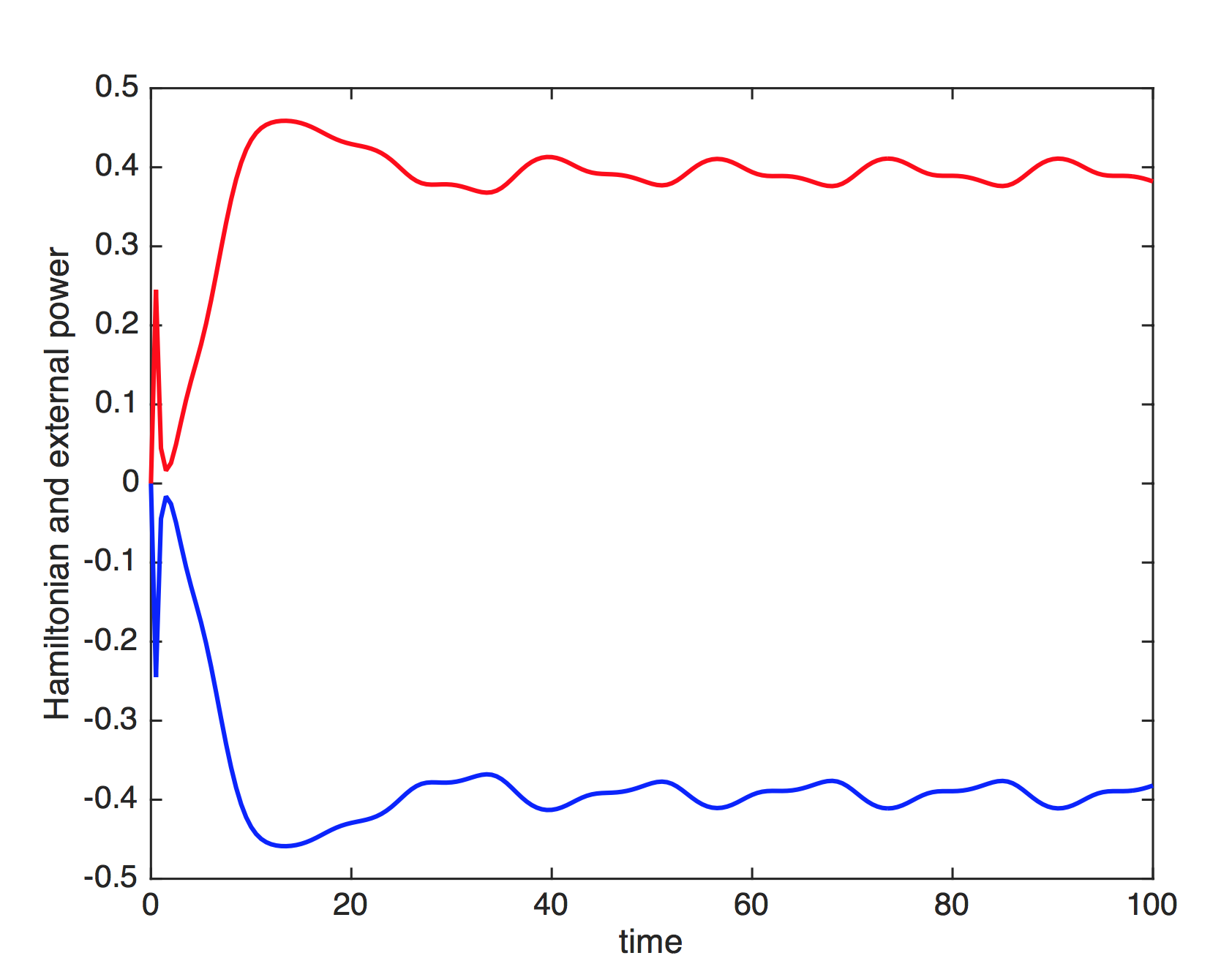}\label{CRB1}} 
\subfloat[]{\includegraphics[width=0.47\linewidth]{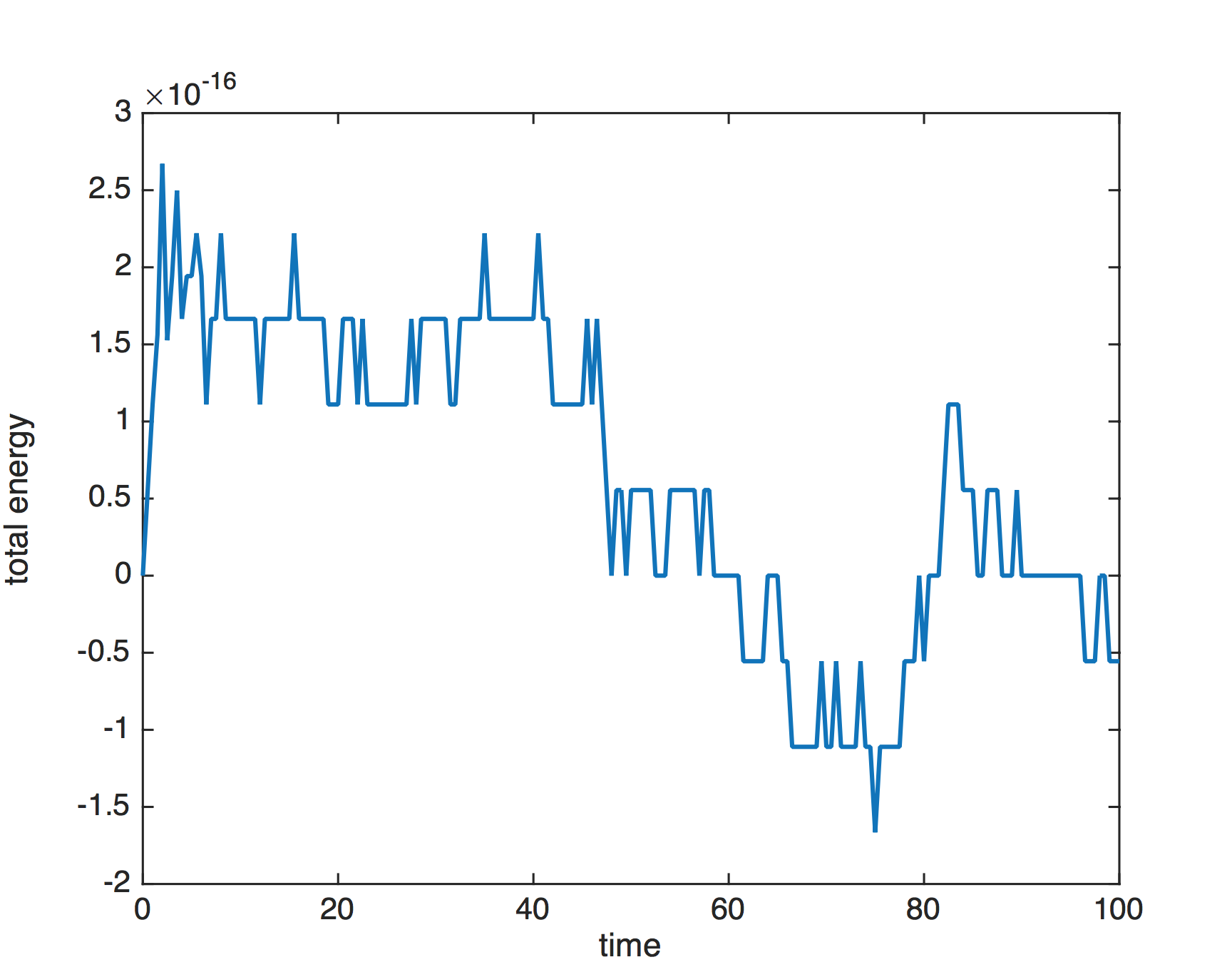}\label{CRB2}}
\end{center}
\caption{ (a) Plot of the evolution of the discrete external power $A_{ext}$ (red), and the difference in the Hamiltonian $H(t_n)-H(t_0)$ (blue). (b) The sum of these two energies. Step size $h=0.5$. \label{CRB}}

\end{figure}

\subsection{Controlled Pendulum}\label{controlled pendulum}
For a second experiment consider the simple pendulum, with a small non-linear controller term for the momentum. The system has the format \eqref{PHS} with $ x = [q,p]^T \in \mathbb{R}^2$ and
\begin{IEEEeqnarray}{rCl}
H(q,p) &=& \frac{1}{2}p^2 +1 - \cos q, \nonumber \\
B(x) &=& J, \nonumber  \\
G(x) &=& [0,1]^T, \label{pend}  \\
u &=& \phi(y) = -0.01\arctan y. \nonumber 
\end{IEEEeqnarray}
Using the theory from Theorem \ref{PassBasedControl}, one can show that this system will converge from almost every initial condition to the stable equilibrium $p=0$, $q = n2\pi$ for some integer $n$. Note that all these values of $q$ correspond to the same physical position. The system also has an unstable equilibrium at $p=0$, $q = (2n+1)\pi$.

In Fig. \ref{pendAng}, \ref{pendU}, and \ref{pendHam} we compare the evolution of the position, the absolute error in the Hamiltonian, and the input $u$ respectively for method \eqref{avfPHS}, the implicit midpoint method, the averaged vector field method, and the improved Euler method, all second order. The initial state is $x(0) = [2.8,1.4]^T$. We observe that the AVF method and the method \eqref{avfPHS} (which is based on this), outperforms the implicit midpoint, not to mention the improved Euler method, for the choice of step size, $h=0.5$. 

In particular, in Fig. \ref{pendAng}, the two latter methods give the wrong number of full rotations, $n$, of the pendulum before it starts to converge towards the stable equilibrium. Consequently the input signal as shown in Fig. \ref{pendU} is also qualitatively wrong for these methods. In contrast, both the AVF method and the method \eqref{avfPHS} produce results which are difficult to distinguish from the exact solution on the scale shown. The AVF method and the method \eqref{avfPHS} are seen to have comparable energy preservation in Fig. \ref{pendHam}, which is superior to the implicit midpoint and the improved Euler method.

\begin{figure}[!t]
 \begin{center}
\includegraphics[width=1\linewidth]{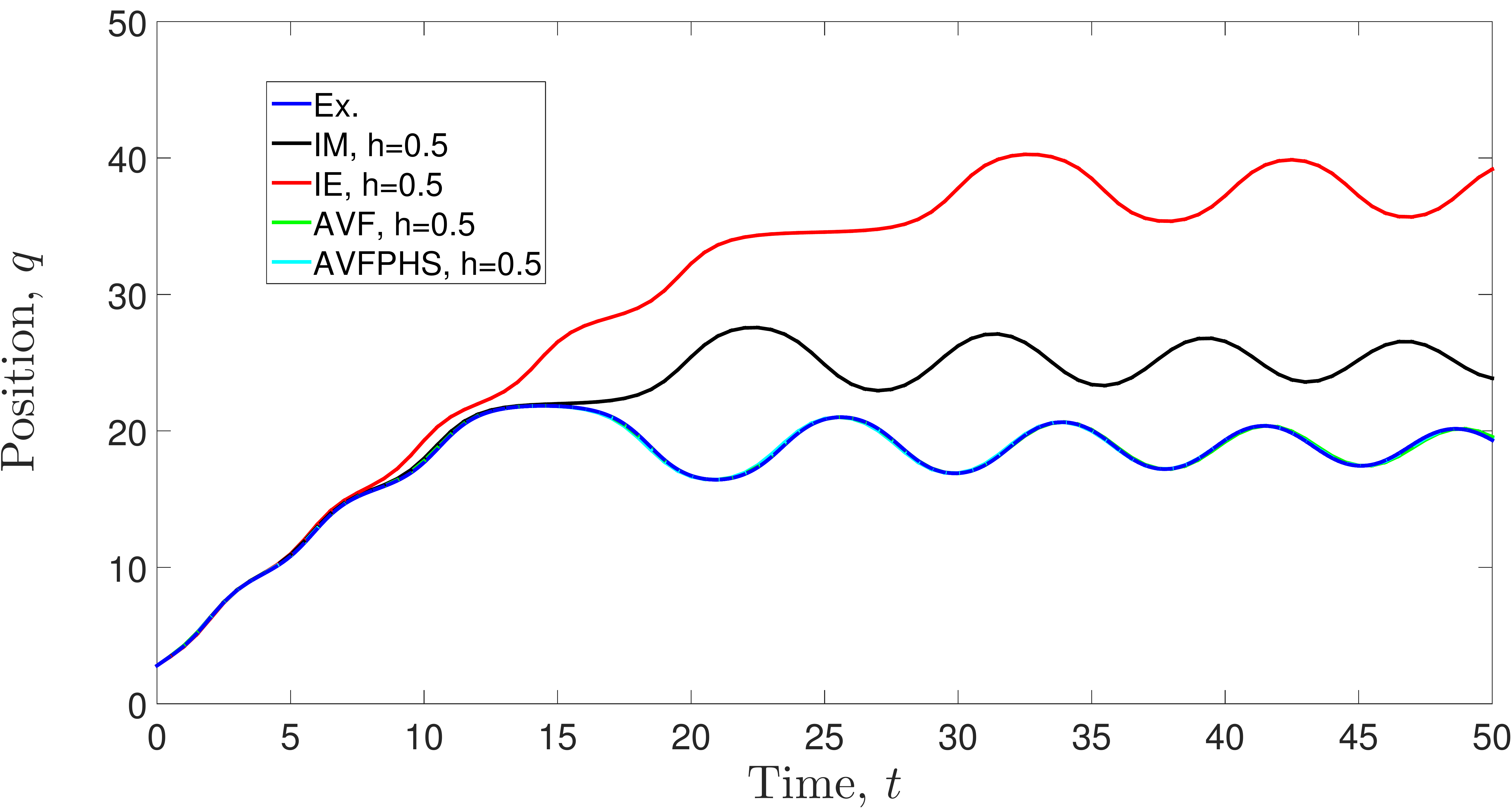}
\end{center}
\caption{Evolution of the pendulum angle $q$ for \eqref{pend}. Shown are the exact solution (Ex.), the method \eqref{avfPHS} (AVFPHS), the implicit midpoint method (IM), the averaged vector field method (AVF), and the improved Euler method (IE). \label{pendAng}}
\end{figure}

\begin{figure}[!t]
 \begin{center}
\includegraphics[width=1\linewidth]{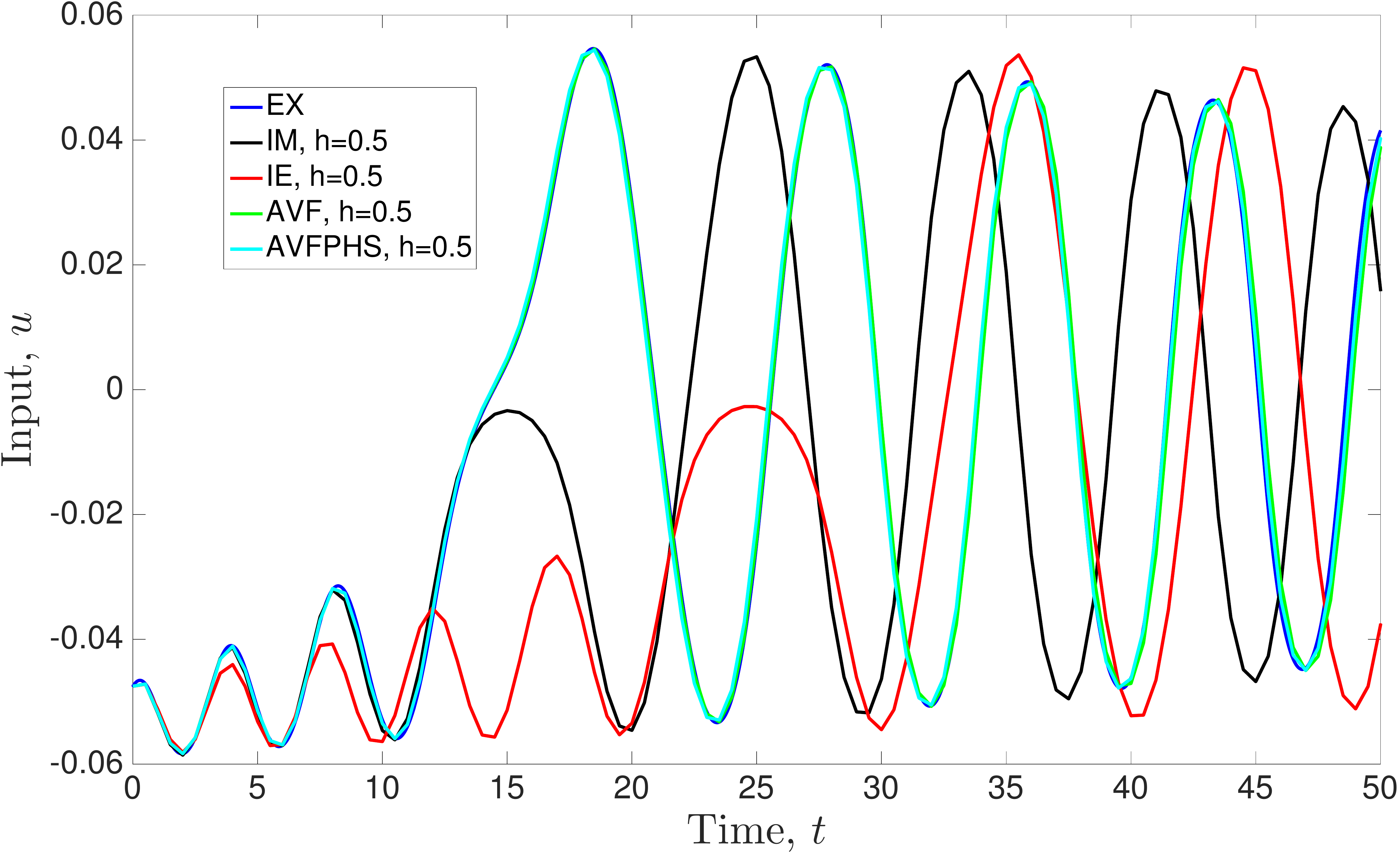}
\end{center}
\caption{Evolution of the input $u$ for \eqref{pend}. Shown are the exact solution (Ex.), the method \eqref{avfPHS} (AVFPHS), the implicit midpoint method (IM), the averaged vector field method (AVF), and the improved Euler method (IE). \label{pendU}}
\end{figure}

\begin{figure}[!t]
 \begin{center}
\includegraphics[width=1\linewidth]{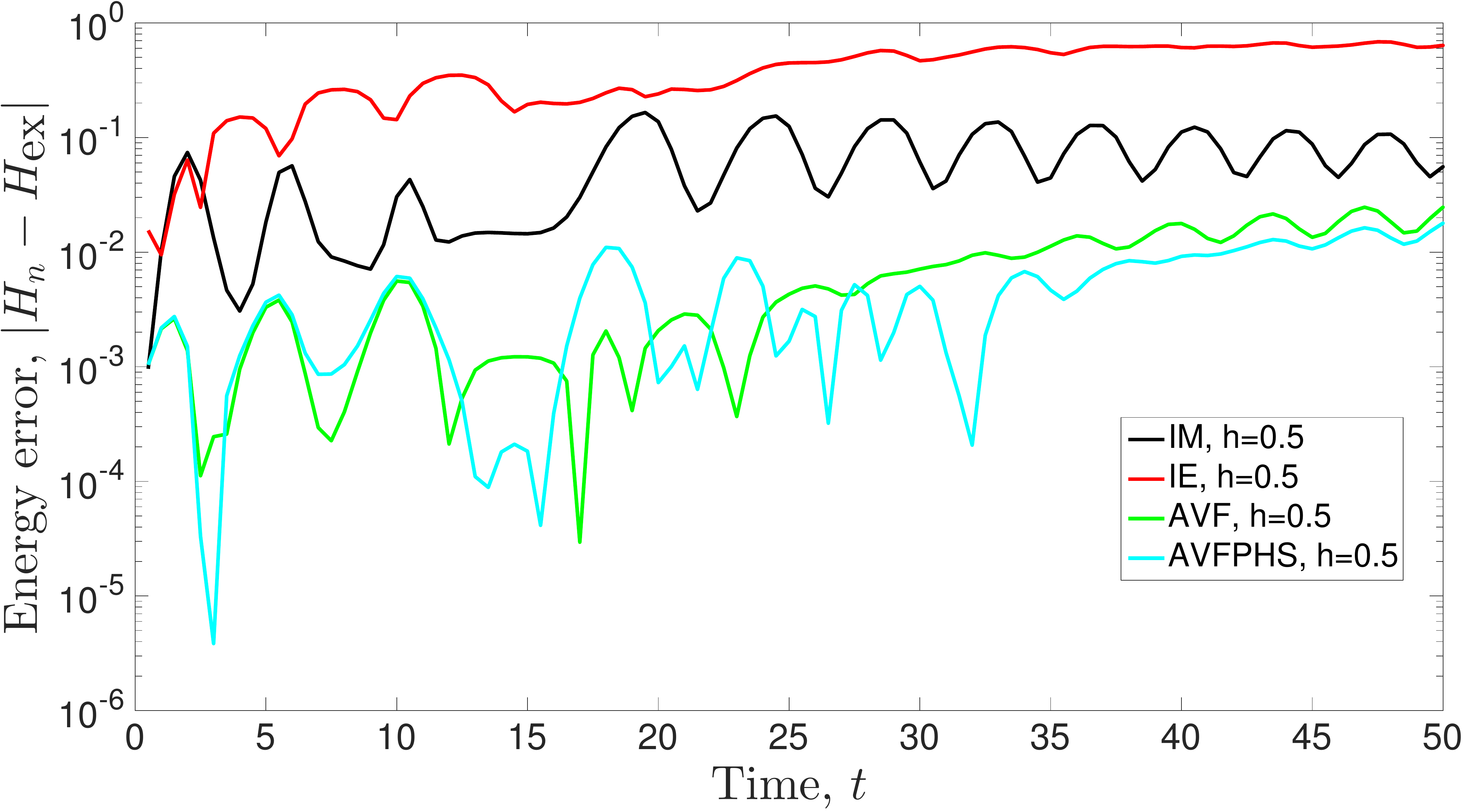}
\end{center}
\caption{Evolution of the absolute error in the Hamiltonian for \eqref{pend}. Shown are the method \eqref{avfPHS} (AVFPHS), the implicit midpoint method (IM), the averaged vector field method (AVF), and the improved Euler method (IE). \label{pendHam}}
\end{figure}
\subsection{Capacitor Microphone}
For a system with dissipation, we consider the capacitor microphone from \cite{vanderschaft}, which can also be written on the format \eqref{PHS} with $x = [q,p,Q]^T \in \mathbb{R}^3$ and
\begin{IEEEeqnarray}{rCl}
H(x) &=& \frac{1}{2m}p^2+\frac{1}{2}(q-\bar{q})^2+\frac{1}{2}qQ^2, \nonumber \\
B(x) &=& \left[\begin{array}{ccc}
0 & 1 & 0\\
-1 & -c & 0\\
0 & 0 & -1/R\\
\end{array} \right], \nonumber  \\
G(x) &=& [0,1,1/R]^T, \label{mic}  \\
u &=& -\phi(y) = -\frac{1}{2}\sqrt[3]{y}. \nonumber 
\end{IEEEeqnarray}
Here $R = 100$ is the resistance, and $c = 0.1$ the damping constant of the spring to which the right capacitor plate, with mass $m=4$, is attached. $\bar{q} = 3$ is the equilibrium point of the spring. 

In Fig. \ref{micFig} the evolution of the absolute error in the Hamiltonian for the dissipative system \eqref{mic} is compared for method \eqref{avfPHS} and the improved Euler method, with step size $h=0.5$. Method \eqref{avfPHS} is again seen to better capture the correct evolution of the energy when compared to improved Euler. Results for the implicit midpoint method and the averaged vector field method were here comparable to method \eqref{avfPHS}.

\begin{figure}[!t]
 \begin{center}
\includegraphics[width=1\linewidth]{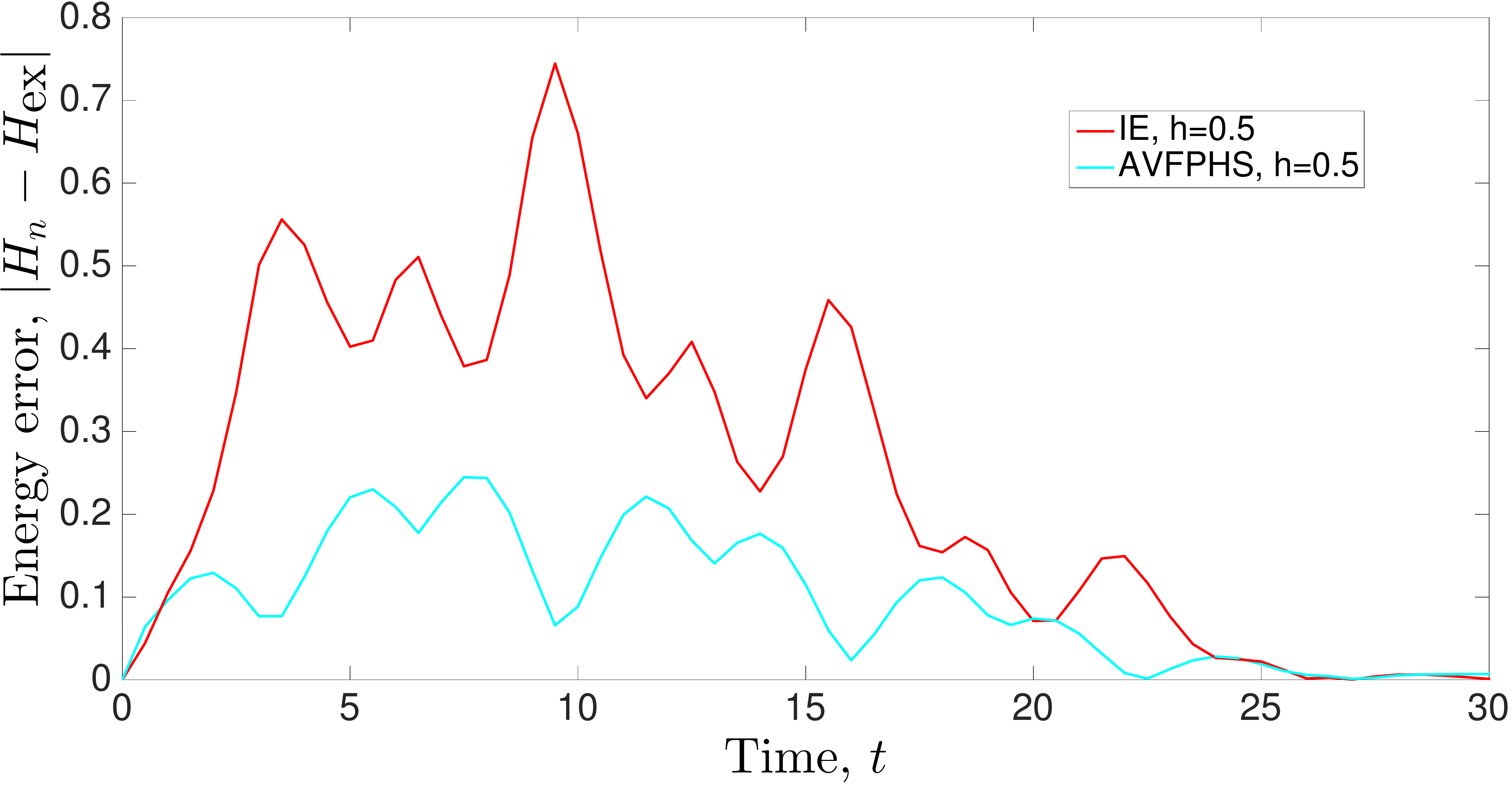}
\end{center}
\caption{Evolution of the absolute error in the Hamiltonian $|H(x_n)-H(x(t_n)|$ for \eqref{mic}. Shown are the method \eqref{avfPHS} (AVFPHS), and the improved Euler method (IE). \label{micFig}}
\end{figure}

\section{Conclusion}
\label{concluding}
We have presented a systematic way to design discrete port-Hamiltonian systems, starting from a continuous system, by applying discrete gradient methods and splitting methods. The obtained discrete port-Hamiltonian systems are passive, and can be globally stabilized with respect to an equilibrium point by an appropriate choice of the input of the discrete system. The obtained discrete systems are port-Hamiltonian in the sense that they preserve a discrete notion of passivity and a generalized Dirac structure. The methods derived using this approach showed promising results in numerical experiments.


%

\appendix[Higher Order Discrete Gradient Methods]
Generalisations of the second order method (\ref{avfPHS}) to higher order can be obtained using a collocation idea as in \cite{Hairer2011}. Let us denote the Lagrange basis function on the node $c_j$ by $\ell_j$ for $j = 1,\dots,s$, write $\sigma(t)$ for the collocation polynomial, and define $X_{\tau}:=\sigma(t_n+\tau h)$ and $X_j:=\sigma(t_n+c_j h)$. Now, consider the following collocation method to integrate \eqref{PHS}:
\begin{IEEEeqnarray}{rCl}
\sigma(t_n)&=&x_n, \nonumber\\*
\dot{\sigma}(t_n+c_jh)&=&B(X_j)\bar{\nabla} H_j+G(X_j)u_j, \label{coll}\\*
x_{n+1}&=&\sigma(t_n+h),\nonumber
\end{IEEEeqnarray}

with
\begin{IEEEeqnarray*}{rCl}
b_j &:=& \int_0^1 \ell_j(\alpha) \, d\alpha, \\ 
\bar{\nabla} H_j&:=&\int_0^1\, \frac{\ell_j(\alpha)}{b_j}\,\nabla H(X_{\alpha})\, d\alpha,\ \ y_j := G(X_j)^T\bar{\nabla} H_j,
\end{IEEEeqnarray*}
and where the discrete controls $u_j$ depend on $\sigma$.
Using Lagrange interpolation we can express the derivative of the collocation polynomial as
$$
\dot{\sigma}(t_n+\tau h)=\sum_{j=1}^s\ell_j(\tau)\left[  B(X_j) \bar{\nabla} H_j+G(X_j)u_j\right],
$$
and obtain $X_{\tau}=\sigma(t_n+\tau h)$ by integration. We may define stage values for the output as
$$
y_{j} = G(X_j)^T\bar{\nabla} H_j.
$$
The collocation polynomial $X_{\tau}$ gives a natural continuous form of the numerical solution on the whole interval of integration. Along the approximated solution  $X_{\tau}$, a polynomial of degree $s$, we will show that the numerical method preserves a discrete passivity property, for quadratures $c_1,\dots ,c_s$  with positive weights $b_1,\dots ,b_s$.

In fact we have
$$
H(x_{n+1})-H(x_n)=h\int_0^1\nabla H(X_{\tau})^T\dot{\sigma}(t_n+\tau h)\,d\tau.
$$
Then after simple calculations we obtain
\begin{IEEEeqnarray*}{rCl}
H(x_{n+1})-H(x_n)&=&h\sum_{j=1}^sb_j\bar{\nabla} H_j^TB(X_j)\bar{\nabla} H_j \\*
&& +\> h\sum_{j=1}^sb_j\bar{\nabla} H_j^TG(X_j)u_j,
\end{IEEEeqnarray*}
and upon using that $B(\cdot )$ is skew-symmetric
\begin{IEEEeqnarray}{rCl}
H(x_{n+1})-H(x_n)&=& h\sum_{j=1}^sb_j\bar{\nabla} H_j^TG(X_j)u_j \nonumber \\*
&=&  h\sum_{j=1}^sb_jy_j^Tu_j, \label{CollEn}
\end{IEEEeqnarray}
which clearly satisfies the discrete energy balance equation \eqref{DEC}. The consequent passivity inequality becomes
$$
H(x_{n+1})-H(x_n)\le h\sum_{j=1}^sb_jy_j^Tu_j.
$$
We now state and prove the following theorem
\begin{theorem}
\label{th:pass}
Assume that the discrete port-Hamiltonian system \eqref{coll} defines a unique one step map $x_{n+1} = \Psi(x_n)$, and that the system is passive with radially unbounded positive definite storage function $H$. Furthermore assume that no numerical solution $\{x_n\}_{n \in \mathbb{N}}$ satisfying the system of equations
\begin{IEEEeqnarray}{rCl}
\sigma(t_n)&=&x_n, \nonumber\\
\dot{\sigma}(t_n+c_jh)&=&B(X_j)\bar{\nabla} H_j, \label{collSimp}\\
x_{n+1}&=&\sigma(t_n+h),\nonumber
\end{IEEEeqnarray}
can simultaneously satisfy the requirement $y_j = 0$ for $j = 1,\dots,s$ at every solution step, other than the trivial solution, i.e. $x_n = 0$ for $n \in \mathbb{N}$.
Then the origin $x=0$ can be globally stabilised with the choice of an appropriate control input 
$u_j=-\phi(y_j)$ where $\phi$ is a function such that $\phi(0)=0$ and $y^T\phi(y)>0$ for all $y\neq 0$.
\end{theorem}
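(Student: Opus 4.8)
The plan is to reproduce, essentially verbatim, the Lyapunov/LaSalle argument of Theorem~\ref{discPassBasedControl}, with the single-stage discrete energy balance \eqref{DEC} replaced by the collocation energy balance \eqref{CollEn} already established above. First I would substitute the control $u_j = -\phi(y_j)$ into \eqref{CollEn} to obtain
\begin{IEEEeqnarray*}{rCl}
\Delta H(x_n) &:=& H(x_{n+1})-H(x_n) = h\sum_{j=1}^s b_j y_j^T u_j \\
&=& -h\sum_{j=1}^s b_j y_j^T \phi(y_j) \leq 0,
\end{IEEEeqnarray*}
where the inequality uses the positivity of the quadrature weights $b_j$ together with the assumed property $y^T\phi(y) > 0$ for $y \neq 0$. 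Since $H$ is continuous on $\mathbb{R}^n$, this shows $H$ is a discrete Lyapunov function for the one-step map $\Psi$; and since $H$ is radially unbounded and positive definite, every solution sequence $\{x_n\}$ is bounded.

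Next I would invoke the discrete Invariance Principle, Proposition~\ref{invariance}, to conclude that $x_n \rightarrow M \neq \emptyset$, where $M$ is the largest positively invariant set contained in $E = \{x \in \mathbb{R}^n : \Delta H(x) = 0\}$. Global asymptotic stability of the origin then reduces to verifying $M = \{0\}$, exactly as in Theorem~\ref{discPassBasedControl}.

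The crux is characterising $M$. On $E$ the equality $\Delta H(x_n) = 0$, combined with $b_j > 0$ and each summand $b_j y_j^T\phi(y_j) \geq 0$, forces $y_j^T\phi(y_j) = 0$ and hence $y_j = 0$ for every $j$; the properties of $\phi$ then give $u_j = -\phi(0) = 0$ for every $j$. I expect the one genuinely new point, relative to the first-order case, to be the reduction step: one must observe that once $u_j = 0$ holds at every step of a forward-invariant trajectory, the stage equations of the full scheme \eqref{coll} collapse to those of the uncontrolled scheme \eqref{collSimp}, so that any trajectory lying in $M$ is precisely a solution sequence of \eqref{collSimp} with $y_j = 0$ for all $j$ and all $n$. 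The discrete zero-state observability hypothesis, which is stated exactly for \eqref{collSimp}, then yields $x_n \equiv 0$, whence $M = \{0\}$ and the origin is globally asymptotically stable.
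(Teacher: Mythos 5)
Your proposal is correct and follows essentially the same route as the paper's own proof: substitute $u_j=-\phi(y_j)$ into the collocation energy balance \eqref{CollEn}, use positivity of the weights $b_j$ and the termwise nonnegativity of $b_j\,y_j^T\phi(y_j)$ to make $H$ a discrete Lyapunov function, apply Proposition~\ref{invariance}, and characterise the invariant set by noting that $y_j=0$ forces $u_j=0$ so that \eqref{coll} reduces to \eqref{collSimp} and the zero-state observability hypothesis applies. No gaps; the reduction step you flag as ``genuinely new'' is exactly the one sentence the paper devotes to it.
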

\begin{proof}
From \eqref{CollEn} we have
\begin{IEEEeqnarray*}{rCl}
\Delta H(x_n) &=& H(\Psi(x_n))-H(x_n) \\*
&=& -h\sum_{j=1}^sb_j\bar{\nabla}H_j^TG(X_j)\phi(G(X_j)^T\bar{\nabla}H_j) \leq 0,
\end{IEEEeqnarray*}
where the last inequality follows from the weights $b_j$ being positive and the properties of $\phi$. Note that this inequality holds termwise. Since $H$ is continuous on $\mathbb{R}^n$ it is a (discrete) Lyapunov function for \eqref{coll} on $\mathbb{R}^n$. In addition, because $H$ is radially unbounded, it follows that all solutions of this discrete system are bounded.

Consequently from Proposition \ref{invariance}, $x_n \rightarrow M \neq \emptyset$, where $M$ is the largest positively invariant set contained in the set $E = \{x \in \mathbb{R}^n: \Delta H(x) = 0\}$. Thus if $M = \{0\}$ the origin will be globally asymptotically stable.

Now, from the above calculations $\Delta H(x_n) = 0$ implies $y_j = 0$ for all $j$, which from the properties of $\phi$ implies that $u_j = 0$ for all $j$. Therefore \eqref{coll} reduces to \eqref{collSimp}. Now from the zero-state observability requirement $x_n = 0$ for $n \in \mathbb{N}$, and consequently $M = \{0\}$.
\end{proof}


\ifCLASSOPTIONcaptionsoff
  \newpage
\fi



%
\bibliographystyle{IEEEtran}
\bibliography{./References}
%

\begin{IEEEbiographynophoto}{Elena Celledoni}
Biography will be included in the final published version, if the paper is accepted.
\end{IEEEbiographynophoto}

\begin{IEEEbiographynophoto}{Eirik Hoel H{\o}iseth}
Biography will be included in the final published version, if the paper is accepted.
\end{IEEEbiographynophoto}

\end{document}